\newtheorem{lemma}{Lemma}[section]
\newtheorem{cor}[lemma]{Corollary}
\newtheorem{conj}[lemma]{Conjecture}
\newtheorem{claim*}{Claim}
\newtheorem{thm}[lemma]{Theorem}
\newtheorem{example}[lemma]{Example}
\theoremstyle{remark}
\newtheorem{remark}[lemma]{Remark}
\newcommand{\reg}{\operatorname{reg}}
\newcommand{\rank}{\operatorname{rank}}
\newcommand{\codim}{\operatorname{codim}}
\newcommand{\defi}[1]{{\upshape\sffamily #1}}
\title[A special case of the BEH rank conjecture]{A special case of the Buchsbaum-Eisenbud-Horrocks rank conjecture}
\subjclass[2000]{Primary 13D02; Secondary 13D25}
\author{Daniel Erman}
\address{Department of Mathematics, University of California,
        Berkeley, CA 94720-3840, USA}
\email{derman@math.berkeley.edu}
\urladdr{http://math.berkeley.edu/\~{}derman}
\thanks{
The author is partially supported by an NDSEG fellowship.
}
\date{\today}
\begin{document}
\maketitle
\begin{abstract}
The Buchsbaum-Eisenbud-Horrocks rank conjecture proposes lower bounds for the Betti numbers of a graded module $M$ based on the codimension of $M$.  We prove a special case of this conjecture via Boij-S\"oderberg theory.  More specifically, we show that the conjecture holds for graded modules where the regularity of $M$ is small relative to the minimal degree of a first syzygy of $M$.  Our approach also yields an asymptotic lower bound for the Betti numbers of powers of an ideal generated in a single degree.
\end{abstract}

\section{Introduction}
Let $k$ be any field, let $S=k[x_1, \dots, x_n]$ be the polynomial ring with the usual grading, and let $M$ be a graded $S$-module.  The Buchsbaum-Eisenbud-Horrocks rank conjecture (herein the BEH rank conjecture) says roughly that the Koszul complex is the ``smallest'' possible minimal free resolution.\footnote{Terminology for this conjecture is inconsistent in the literature.  In some places this conjecture is known as Horrocks' Conjecture or as the Syzygy Conjecture.}   The conjecture was formulated by Buchsbaum and Eisenbud in \cite[p.\ 453]{buchs-eis-gor} and, independently, the conjecture is implicit in a question of Horrocks \cite[Problem 24]{hartshorne-vector}.  Although the conjecture is most commonly phrased for regular local rings, we consider the graded case.  Let
$$
\xymatrix{
0\ar[r]& F_p \ar[r]^{\phi_p}& F_{p-1}\ar[r]^{\phi_{p-1}}&\dots \ar[r]^{\phi_1}& F_0 \ar[r]^{\phi_0}& M\ar[r]& 0
}
$$
be the graded minimal free resolution of $M$.  Let $\beta_j(M):=\rank(F_j)$.

\begin{conj}[Graded BEH Rank Conjecture]\label{conj:beh}
Let $M$ be a graded Cohen-Macaulay $S$-module of codimension $c$.  Then:
\[
\beta_j(M)\geq \binom{c}{j}
\]
for $j=0, \dots, c$.
\end{conj}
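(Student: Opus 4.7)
The natural approach --- and the one that succeeds in the special case announced in the abstract --- is to pass through Boij-S\"oderberg theory and establish the bound one pure diagram at a time.

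After a degree shift, assume $M$ is generated in nonnegative degrees. Applying the Boij-S\"oderberg decomposition theorem for Cohen-Macaulay modules of codimension $c$, write
\[
\beta(M) \;=\; \sum_{d} a_d\, \pi(d), \qquad a_d \in \mathbb{Q}_{\geq 0},
\]
where each $\pi(d)$ is a pure diagram indexed by a strictly increasing degree sequence $d = (d_0 < d_1 < \cdots < d_c)$, and each $d_i$ lies between the minimum and maximum nonzero degrees in column $i$ of $\beta(M)$. In particular, if $s$ is the minimal degree of a first syzygy and $r$ is the regularity, then $d_1 \geq s$ and $d_i - i \leq r$ for every $d$ in the support of the decomposition.

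The Herzog-K\"uhl equations determine the Betti numbers of each pure diagram up to a common scalar:
\[
\frac{\pi(d)_i}{\pi(d)_0} \;=\; \prod_{\substack{1 \leq j \leq c \\ j \neq i}} \frac{d_j - d_0}{|d_j - d_i|}.
\]
For the Koszul-type sequence $(0, 1, 2, \ldots, c)$ this ratio equals exactly $\binom{c}{i}$. The core of the strategy is to establish the pointwise inequality
\[
\pi(d)_i \;\geq\; \binom{c}{i}\, \pi(d)_0
\]
for every $d$ occurring with $a_d > 0$. Summing over $d$ and using $\beta_0(M) \geq 1$ then immediately yields $\beta_i(M) \geq \binom{c}{i}$.

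The main obstacle is the pointwise inequality, which is \emph{not} true for arbitrary degree sequences: the ratio above is not monotone in the individual $d_j$, so one cannot hope to deform freely from the Koszul sequence while preserving the bound. This is presumably why the full conjecture is so elusive. In the regime considered here --- regularity small relative to $s$ --- the bounds $s+i-1 \leq d_i \leq r+i$ force every admissible degree sequence to cluster near $(0, s, s+1, \ldots, s+c-1)$. For this extremal sequence one computes in closed form that $\pi(d)_i/\pi(d)_0 = i \cdot \binom{s+c-1}{c}/(s+i-1)$, which exceeds $\binom{c}{i}$ by an elementary identity $(i-1)(s-1) \geq 0$. The remaining work is then to show that this inequality persists as $d$ fluctuates within the narrow band permitted by the hypothesis, which should reduce to a finite combinatorial verification on the Herzog-K\"uhl expression.
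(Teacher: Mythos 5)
You have been asked to ``prove'' Conjecture~\ref{conj:beh}, but this statement is a conjecture that the paper itself does not prove. The paper only establishes the special case recorded as Theorem~\ref{thm:weak}, under the extra hypothesis $\reg(M)\leq 2\underline{d}_1(M)-2$. Your write-up is honest about this distinction, and the strategy you sketch is indeed the one used for the special case: decompose $\beta(M)$ by Boij-S\"oderberg, reduce to a pointwise bound on each pure diagram via the Herzog-K\"uhl formulas, and observe that without some hypothesis on $\reg(M)$ relative to $\underline{d}_1(M)$ the pointwise bound $\beta_j(\overline{\pi}(\mathbf{d}))\geq\binom{c}{j}$ fails (the paper exhibits $\overline{\pi}(0,1,2,3,5,6)$ as a counterexample). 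So far so good; no complete proof of the conjecture exists and you correctly decline to claim one.

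However, two concrete points in your sketch of the special case are off. First, your closed-form evaluation for the degree sequence $(0,s,s+1,\dots,s+c-1)$ is misstated: the Herzog-K\"uhl ratio is
\[
\frac{\pi(d)_i}{\pi(d)_0}=\frac{(s+c-1)!}{(s-1)!\,(s+i-1)\,(i-1)!\,(c-i)!}=\binom{c}{i}\cdot\frac{i\binom{s+c-1}{c}}{s+i-1},
\]
so the quantity $i\binom{s+c-1}{c}/(s+i-1)$ that you wrote is the \emph{ratio of the Betti number to $\binom{c}{i}$}, not the Betti number itself; as a sanity check, setting $s=1$ in your formula gives $1$ rather than $\binom{c}{i}$. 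Second, and more substantively, your characterization of the ``remaining work'' as a finite combinatorial verification on degree sequences clustered near $(0,s,\dots,s+c-1)$ undersells what must be done. The constraints $\underline{d}_1\leq d_1$ and $d_i-i\leq\reg(M)\leq 2\underline{d}_1-2$ translate, in the coordinates $e_i=d_i-d_{i-1}-1$, to the single inequality $\sum_{i\geq 2}e_i\leq e_1$; this defines an unbounded cone, not a narrow band, and the admissible sequences are infinite in number with entries that can be arbitrarily large. The paper's actual argument (Lemmas~\ref{lem:upshifting}--\ref{lem:pure}) is a genuine multivariable optimization: one shows $\partial\mathfrak b_j/\partial e_1\geq 0$, then uses sign conditions on differences of partials to concentrate all ``mass'' into at most the three coordinates $e_1,e_j,e_{j+1}$, and finally parametrizes the constraint and shows the minimum of $\log\mathfrak b_j$ over $[0,1]\times[0,\infty)$ sits at $e_1=0$, where the pure diagram degenerates to the Koszul complex and $\mathfrak b_j=\binom{s}{j}$. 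That degeneration-to-Koszul step is the heart of the proof of Theorem~\ref{thm:weak}, and it is the piece your proposal identifies only as ``should reduce to a finite combinatorial verification.''
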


In this paper, we prove a special case of the graded BEH rank conjecture.  We do {\em not} require that $M$ is Cohen-Macaulay.
\begin{thm}\label{thm:weak}
Let $M$ be a graded $S$-module of codimension $c$, generated in degree $\leq 0$, and let $\underline{d}_1(M)$ be the minimal degree of a first syzygy of $M$.  If $\reg(M) \leq 2\underline{d}_1(M)-2$, then\[
\beta_j(M)\geq \beta_0(M)\binom{c}{j}.
\]
for $j=0, \dots, c$.
\end{thm}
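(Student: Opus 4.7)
The plan is to apply Boij--S\"oderberg theory, reducing the theorem to a combinatorial inequality for each pure diagram in a decomposition of $\beta(M)$. First, write the Betti table of $M$ as a nonnegative rational combination of pure diagrams,
\[
\beta(M) = \sum_{\underline{d}} a_{\underline{d}}\, \pi(\underline{d}),
\]
with $a_{\underline{d}} \geq 0$ and each $\underline{d} = (d_0 < d_1 < \cdots < d_p)$ a strictly increasing degree sequence. Since all entries of each pure diagram are nonnegative, the support of each summand is contained in the support of $\beta(M)$. By linearity, it then suffices to prove $\beta_j(\pi(\underline{d})) \geq \binom{c}{j}\,\beta_0(\pi(\underline{d}))$ for each pure diagram appearing in the decomposition; summing against the $a_{\underline{d}}$ produces the desired inequality for $M$.

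Next, I would translate the hypotheses into constraints on the allowed degree sequences. Support containment gives $d_0 \leq 0$, $d_1 \geq \underline{d}_1(M) =: \delta$, and $d_i \leq i + \reg(M) \leq i + 2\delta - 2$ for every $i \geq 1$. The Herzog--K\"uhl formulas yield
\[
\frac{\beta_j(\pi(\underline{d}))}{\beta_0(\pi(\underline{d}))} = \prod_{i=1,\, i \neq j}^{p} \frac{d_i - d_0}{|d_i - d_j|}.
\]
When $d_0 < 0$, replacing $d_0$ by $0$ strictly shrinks each factor, so it suffices to treat $d_0 = 0$, where the shifted values $e_i := d_i$ satisfy $e_1 \geq \delta$ and $e_i \leq i + 2\delta - 2$. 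Since $M$ need not be Cohen--Macaulay, $p$ may exceed $c$; but for $j \leq c < i$ each factor $e_i/(e_i - e_j)$ is strictly greater than $1$, so dropping those factors decreases the product. Thus the problem reduces to proving the combinatorial inequality
\[
\prod_{i=1,\, i \neq j}^{c} \frac{e_i}{|e_i - e_j|} \geq \binom{c}{j}
\]
for every strictly increasing positive-integer sequence $(e_1, \ldots, e_c)$ with $e_1 \geq \delta$ and $e_i \leq i + 2\delta - 2$.

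The main obstacle is this combinatorial inequality. Writing $e_i = i + h_i$ converts the constraints into the requirement that $(h_1, \ldots, h_c)$ is a weakly increasing sequence with values in $[\delta - 1,\, 2\delta - 2]$. In the base case $h_i \equiv \delta - 1$, the $e_i$'s form a translate of the Koszul sequence and the inequality reduces to $\prod_{i \neq j}(i + \delta - 1)/i \geq 1$, which is immediate. For general $h$-vectors I would treat the $h_i$'s as real parameters and analyze the logarithm of the ratio, showing that any minimum over the (closed) constraint polytope must occur at a corner configuration that can then be checked by direct calculation. For $c = 2$ the minimum is attained at $(e_1, e_2) = (\delta, 2\delta)$ where the bound $\binom{2}{j}$ is sharp; propagating this corner analysis uniformly in $c$ is the principal technical challenge, as the partial derivatives of the ratio with respect to the $h_i$'s have signs that vary subtly depending on the relative sizes of the $e_i$'s and $e_j$.
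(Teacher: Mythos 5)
Your proposal follows the same high-level strategy as the paper: invoke the Boij--S\"oderberg decomposition, reduce by linearity to a statement about each pure diagram, apply the Herzog--K\"uhl formulas, and use the shape hypotheses to constrain the degree sequences that can appear. The intermediate reductions (dropping $d_0$ to $0$, discarding the factors with $i > c$ because they exceed $1$) are all correct and match the paper's Corollary~\ref{cor:pures}.

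However, there is a genuine gap, and you identify it yourself: the key combinatorial inequality
\[
\prod_{\substack{1 \leq i \leq c \\ i \neq j}} \frac{e_i}{|e_i - e_j|} \geq \binom{c}{j}
\]
for degree sequences with $d_0 \leq 0$ and $d_s - s \leq 2d_1 - 2$ is sketched but not proven. You describe it as ``the principal technical challenge'' and observe that the partial derivatives ``have signs that vary subtly,'' but you do not resolve this, and the subtlety is real: this step is the bulk of the paper's work (Lemmas~\ref{lem:upshifting}, \ref{lem:jtozero}, \ref{lem:pure}). What the paper does is reparametrize by the consecutive gaps $e_i := d_i - d_{i-1} - 1 \geq 0$, under which the hypothesis becomes $e_1 \geq \sum_{i \geq 2} e_i$. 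It then shows $\partial \mathfrak{b}_j / \partial e_1 \geq 0$, so one may push $e_1$ down to the boundary $e_1 = \sum_{i \geq 2} e_i$; next it shows that the directional derivatives $\bigl(\partial/\partial e_j - \partial/\partial e_k\bigr)\mathfrak b_j \leq 0$ for $k < j$ and $\bigl(\partial/\partial e_{j+1} - \partial/\partial e_k\bigr)\mathfrak b_j \leq 0$ for $k > j+1$, which lets you sweep all mass into $e_j$ and $e_{j+1}$; finally a one-parameter degeneration $e_j = t e_1$, $e_{j+1} = (1-t)e_1$ together with a sign computation on $\partial/\partial e_1$ shows the minimum is at $e_1 = 0$, i.e.\ the Koszul sequence $(0,1,\dots,s)$. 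Your ``check the corners'' heuristic is not quite right: the relevant region is an unbounded cone, the minimum is an entire face rather than a vertex (already for $c=2$ equality holds along the full ray $d_2 = 2d_1$), and the choice of which directions to degenerate along depends delicately on $j$. So while your strategic outline is the right one, the proof as written is incomplete precisely where the real difficulty lies.
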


\begin{figure}\label{fig:shape}
	\begin{equation*}
		\bordermatrix{
		& 0& 1& 2 & \dots & p \cr
		0& *& - & - & \dots & - \cr
		1& -& - & - & \dots & - \cr
		& & & & & \cr
		\vdots& \vdots& \vdots  & \vdots &\vdots &\vdots \cr
		& & & & & \cr
		\underline{d}_1-2&- & - & - & \dots & - \cr
		\underline{d}_1-1&- & * & * & \dots & * \cr
		& & & & & \cr
		\vdots& \vdots& \vdots  & \vdots &\vdots &\vdots \cr
		& & & & & \cr
		2\underline{d}_1-3& - & * &*& \dots & * \cr
		2\underline{d}_1-2& -& * & * & \dots & *
		}
	\end{equation*}
	\caption{When $M$ has a Betti diagram of the above shape, then it satisfies the Buchsbaum-Eisenbud-Horrocks Rank Conjecture.}
\end{figure}

Common generalizations of the BEH rank conjecture include removing the Cohen-Macaulay hypothesis and/or strengthening the conclusion to the statement that $\rank(\phi_j)\geq \binom{c-1}{j-1}$ for $j=1, \dots, c-1$.  A different generalization, suggested in \cite[Conj II.8]{carlsson}, replaces the Betti numbers of a free resolution by the homology ranks of a differential graded module.

The BEH rank conjecture has been shown to hold for all modules of codimension at most $4$ \cite[p.\ 267]{evans-griffith}.  In codimension at least $5$, however, the BEH rank conjecture has only been settled for families of modules with additional structure.

Theorem~\ref{thm:weak} applies to modules whose Castelnuovo-Mumford regularity is small relative to the degree of the first syzygies of $M$.  Though the literature on special cases of the BEH rank conjecture is extensive, Theorem~\ref{thm:weak} moves in a new direction.  The most similar result in the literature is perhaps \cite[Thm.\ 0.1]{chang}, which shows that the BEH rank conjecture holds when $M$ is a Cohen-Macaulay module annihilated by the square of the maximal ideal $\mathfrak m$.  Other known cases of the BEH rank conjecture include multigraded modules~\cite[Thm.\ 3]{chara-multi} and \cite{santoni}, cyclic modules in the linkage class of a complete intersection~\cite{huneke-ulrich}, cyclic quotients by monomial ideals~\cite[Cor 2.5]{evans-griffith}, and several more \cite{chang2}, \cite{charalamb}, \cite{dugger}, and \cite{hochster-richert}.  See \cite[pp.\ 25-27]{chara-evans-problems} for an expository account of some of this progress.

The method of proof for Theorem~\ref{thm:weak} is quite different from previous work on the BEH rank conjecture.  Our proof is an application of Boij-S\"oderberg theory, by which we mean the results of \cite{eis-schrey} and \cite{boij-sod2}.  At first glance, it might appear that Boij-S\"oderberg theory would not apply to Conjecture~\ref{conj:beh}: Boij-S\"oderberg theory is based on the principle of only considering Betti diagrams up to scalar multiple, whereas the BEH rank conjecture depends on the integral structure of Betti diagrams.  However, if the Betti diagram of $M$ has shape as in Figure~\ref{fig:shape}, then this imposes conditions on the pure diagrams which can appear in the Boij-S\"oderberg decomposition of $M$.  This allows us to reduce the proof of Theorem~\ref{thm:weak} to a statement about the numerics of pure diagrams.  We then use a multivariable calculus argument to degenerate the relevant pure diagrams to a Koszul complex.

Our analysis of the numerics of pure diagrams also leads to a proposition about the asymptotic behavior of the Betti numbers of $S/I^t$ where $I$ is an ideal generated in a single a degree $\delta$.  Let $c=\codim(S/I)$ and let $b$ be the asymptotic regularity defect of $I$ (see \S\ref{sec:asymp} for a definition).  We will show that
\[
\beta_j(S/I^t) \geq \frac{(b!)^2\delta^{c-1}}{(j-1+b)!(c-j+b)!}t^{c-1} + O(t^{c-2})
 \]
for all $t\gg0$ and all $j\in \{1, \dots, c\}$.

This paper is organized as follows.  In \S\ref{sec:boijsod} we review the relevant aspects of Boij-S\"oderberg theory.  In \S\ref{sec:pure}, we investigate the numerics of pure diagrams which satisfy the conditions of Theorem~\ref{thm:weak}.  This analysis of pure diagrams is the foundation for the proof of Theorem~\ref{thm:weak}, which appears in \S\ref{sec:proof}.  In \S\ref{sec:asymp}, we prove Proposition~\ref{thm:asymp} about asymptotic Betti numbers.  In \S\ref{sec:apps}, we consider applications of Theorem~\ref{thm:weak}.

\section{Review of Boij-S\"oderberg theory}\label{sec:boijsod}
We say that a sequence $\mathbf{d}=(d_0, \dots, d_s)\in \mathbb Z^{s+1}$ is a \defi{degree sequence} if $d_i>d_{i-1}$ for all $i>0$.  We use the notation $\deg(\mathbb Z^{s+1})$ for the space of degree sequences  of length $s+1$.  Given two degree sequence $\mathbf{d},\mathbf{d}'\in \mathbb Z^{s+1}$, we say that $\mathbf{d}\geq \mathbf{d}'$ if $d_i\geq d_i'$ for $i=0, \dots, s$.  If $s\leq p$ and $\mathbf{d}=(d_0, \dots, d_p)$ is a degree sequence, then we define $\tau_s(\mathbf{d})$ to be the truncated degree sequence $\tau_s(\mathbf{d}):=(d_0, \dots, d_s)$.

Each degree sequence $\mathbf{d}$ defines a ray in the cone of Betti diagrams; there exists a unique point  $\overline{\pi}(\mathbf{d})$ on this ray with $\beta_{0,d_0}(\overline{\pi}(\mathbf{d}))=1$ (c.f. \cite[Thm 0.1]{eis-schrey}).  The diagram $\overline{\pi}(\mathbf{d})$ is the \defi{normalized pure diagram of type} $\mathbf{d}$.

Note that $\overline{\pi}(\mathbf{d})$ may have non-integral entries.  For instance
\[
\overline{\pi}(0,1,2,4)=
\begin{pmatrix}
1&\frac{8}{3}&2&-\\
-&-&-&\frac{1}{3}
\end{pmatrix}.
\]
Given any diagram $D$ we define $\beta_{i}(D):=\sum_j \beta_{i,j}(D)$.

Let $M$ be a graded $S$-module of codimension $c$ and projective dimension $p$.  For $i=0, \dots, p$ we define $\underline{d}_i:=\min \{j | \beta_{i,j}(M)\ne 0\}$ and $\overline{d}_i:=\max \{j | \beta_{i,j}(M)\ne 0\}$. We set $\underline{\mathbf{d}}:=(\underline{d}_0, \dots, \underline{d}_p)$ and $\overline{\mathbf{d}}:=(\overline{d}_0, \dots, \overline{d}_p)$.  Boij-S\"oderberg theory shows that the Betti diagram of any graded $S$-module can be expressed as a positive rational sum of pure diagrams which correspond to degree sequences bounded by $\overline{\mathbf{d}}$ and $\underline{\mathbf{d}}$.  The following theorem is weaker than the main results of Boij-S\"oderberg theory, but it will be sufficient for our purposes.

\begin{thm}[Eisenbud-Schreyer (2008), Boij-S\"oderberg (2008)]\label{thm:boijsod}
Let $M$ be a graded $S$-module of projective dimension $p$ and codimension $c$ and let $\overline{\mathbf{d}}$ and $\underline{\mathbf{d}}$ as above.
The Betti diagram $\beta(M)$ can be expressed as a sum:
	$$\beta(M)=\sum_{c\leq s\leq p}\sum_{\overset{\mathbf{d}\in \deg(\mathbb Z^{s+1})}{\tau_s(\underline{\mathbf{d}})\leq \mathbf{d} \leq\tau_s( \overline{\mathbf{d}})}}a_{\mathbf{d}} \overline{\pi}(\mathbf{d}) $$
	where each $a_{\mathbf{d}}$ is a nonnegative rational number.
\end{thm}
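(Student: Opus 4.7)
The plan is to prove this via Boij-S\"oderberg's greedy decomposition algorithm, with the crucial nonnegativity input supplied by Eisenbud-Schreyer's pairings with cohomology tables of supernatural sheaves on $\pp^{n-1}$. Working in the finite-dimensional rational vector space $V$ of diagrams supported in the box $\{0,\dots,p\}\times\{\underline{d}_0,\dots,\overline{d}_p\}$, define $C\subset V$ to be the rational convex cone generated by the pure diagrams $\overline{\pi}(\mathbf{d})$ for degree sequences satisfying $\tau_s(\underline{\mathbf{d}})\leq\mathbf{d}\leq\tau_s(\overline{\mathbf{d}})$ for some $c\leq s\leq p$. The theorem then splits into two parts: membership, $\beta(M)\in C$, and extraction of an explicit decomposition.

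The greedy step is simple to state: given a diagram $D\in C$, identify its minimal degree sequence $\mathbf{e}$ (where $e_i:=\min\{j:D_{i,j}\ne 0\}$), and subtract $\lambda\overline{\pi}(\mathbf{e})$ for the largest $\lambda$ such that $D-\lambda\overline{\pi}(\mathbf{e})$ has nonnegative entries. At least one entry indexed by $\mathbf{e}$ is zeroed out, so the minimal degree sequence of the residual strictly increases in the componentwise partial order. Since only finitely many admissible degree sequences fit in the box, iteration terminates and yields the claimed nonnegative rational expression. The sandwich bounds on the pure summands follow by induction on the step count, since greedy subtraction can only raise the minimal and lower the maximal degree sequence.

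The main obstacle, and the real content of the theorem, is to prove that $\beta(M)\in C$ and that every greedy residual remains in $C$. The strategy is Eisenbud-Schreyer's: for each admissible ``splitting position'' that partitions the grid into an upper-left block $L$ and a lower-right block $R$, construct a linear functional on $V$ by pairing with the cohomology table of a supernatural vector bundle whose root sequence is adapted to the splitting. Via the Bernstein-Gelfand-Gelfand correspondence and the cohomological vanishing enjoyed by supernatural bundles, one shows these pairings are nonnegative on every genuine Betti table. A facet-counting argument then identifies these pairings, together with coordinate-nonnegativity, as the defining inequalities of $C$. Consequently $\beta(M)\in C$, and because each pure diagram itself pairs nonnegatively with every supernatural cohomology table, each greedy subtraction decreases every facet functional by a nonnegative amount; the residual therefore never escapes $C$, and the algorithm carries through to produce the decomposition.
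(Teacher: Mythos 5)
The paper does not prove Theorem~\ref{thm:boijsod} at all: it explicitly refers the reader to \cite[\S7]{eis-schrey} for the Cohen--Macaulay case and to \cite[Thm 2]{boij-sod2} for the general case, and treats the theorem as a black box for the rest of the argument. So there is no in-paper proof to compare against. Your sketch is aimed at reconstructing the proof in those external references, and much of it is on target---the greedy subtraction, the termination by finiteness of admissible degree sequences, and the Eisenbud--Schreyer facet description via pairings with supernatural cohomology tables are all real ingredients of the argument.

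However, the step where you conclude that greedy residuals stay in $C$ is not sound as written. You argue: ``because each pure diagram itself pairs nonnegatively with every supernatural cohomology table, each greedy subtraction decreases every facet functional by a nonnegative amount; the residual therefore never escapes $C$.'' That inference goes the wrong way. If a facet functional is nonnegative on $D$ and you subtract something on which the functional is also nonnegative, the value of the functional on $D - \lambda\overline{\pi}(\mathbf{e})$ can only go \emph{down}, so it could become negative and leave the cone. (Compare: $3 - 5 < 0$.) What actually makes the algorithm work in Eisenbud--Schreyer and Boij--S\"oderberg is the finer structure of $C$ as a simplicial fan indexed by maximal chains in the poset of admissible degree sequences: a Betti diagram lies in the simplicial cone spanned by the pure diagrams along one such chain, and the greedy subtraction peels off the smallest element of that chain, moving to a face of the same simplex rather than to an arbitrary point of $C$. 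Closing the gap requires establishing this simplicial decomposition, or in Boij--S\"oderberg's formulation, proving directly by induction along chains that $\beta(M)$ minus the greedy multiple of the smallest pure diagram is still a nonnegative combination of pure diagrams with strictly larger minimal degree sequence. Without that, your termination argument produces a sequence of diagrams, but you have no guarantee they remain in the cone, and hence no guarantee that each greedy coefficient $\lambda$ is well-defined and nonnegative beyond the first step.
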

A stronger version of this theorem allows for a unique decomposition of $\beta(M)$ into a positive rational sum of pure diagrams and even provides an algorithm for producing this decomposition.  See the introduction of \cite{eis-schrey} for an expository treatment of the main results of Boij-S\"oderberg theory, and see \cite[\S7]{eis-schrey} for a proof of of Theorem~\ref{thm:boijsod} in the case where $M$ is Cohen-Macaulay.  See \cite[Thm 2]{boij-sod2} for a proof of Theorem~\ref{thm:boijsod} when $M$ is not necessarily Cohen-Macaulay.

\section{Ranks of Pure Diagrams}\label{sec:pure}

In this section, we study the numerics of the pure diagrams which satisfy the conditions of Theorem~\ref{thm:weak}.  Given any degree sequence $\mathbf{d}$, the Betti numbers of $\overline{\pi}(\mathbf{d})$ can be expressed as a rational function in the $d_i$, and we will use these rational functions to investigate the pure diagrams.

We introduce auxiliary functions to simplify the notation.  For $\mathbf{e}=(e_1, \dots, e_s)\in \mathbb R^s$, we define linear functions, $T_{i}, U_{i,j}, $ and $V_{i,j}$ from $\mathbb R^s$ to $\mathbb R$ by the following formulas:
\begin{align*}
T_{i}(\mathbf{e})&:=i+e_1+e_2+\dots +e_i, \text{ for } i=1, \dots, s\\
U_{i,j}(\mathbf{e})&:=(j-i+1)+e_{i}+e_{i+1}+\dots e_{j}, \text{ whenever } i<j\\
V_{i,j}(\mathbf{e})&:=(i-j)+e_{j+1}+\dots +e_{i}, \text{ whenever } i>j
\end{align*}

Let $\mathfrak d: \mathbb R^s \to \mathbb R^{s+1}$ be the linear map:
\[
\mathfrak d_j(\mathbf{e})=
\begin{cases} 0 & \text{ for } j=0\\
j+\sum_{i=0}^j e_i, & \text{ for } j=1, \dots, s.
\end{cases}
\]
Note that $\mathbf{e}\in \mathbb Z^s_{\geq 0}$ if and only if $\mathfrak d(\mathbf{e})$ is a degree sequence with first entry equal to $0$.

We define the rational function $\mathfrak b_j: \mathbb R^s\to \mathbb R$ by:
\begin{equation}\label{eqn:betaj}
\mathfrak b_j(\mathbf{e}):=\frac{\left(\prod_{i=1, i\ne j}^s T_i\right)} {\left(\prod_{i=2}^{j-1} U_{i,j} \right) \left(\prod_{i=j+1}^s V_{i,j}\right)}
\end{equation}
for $j=1, \dots, s$.  The rational function $\mathfrak b_j$ has no poles on $\mathbb R^s_{\geq 0}$.  The purpose of these definitions is summarized in the following lemma:
\begin{lemma}\label{lem:defs}
Let $\mathbf{e}\in \mathbb Z^s_{\geq 0}$.  Then we have:
\[
\mathfrak b_j(\mathbf{e})=\beta_j\left(\overline{\pi}\left(\mathfrak d\left(\mathbf{e}\right)\right)\right)
\]
\end{lemma}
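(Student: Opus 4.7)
The lemma amounts to recognizing $\mathfrak b_j(\mathbf{e})$ as the Herzog-Kühl formula for the Betti numbers of the normalized pure diagram, rewritten in the parametrization $\mathbf{d}=\mathfrak d(\mathbf{e})$. So the proof is essentially a direct calculation, and my plan is to unwind each side.

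First, I would recall (e.g.\ from \cite[Thm.\ 0.1]{eis-schrey}, or directly from the Herzog-Kühl equations satisfied by any pure resolution) that the normalized pure diagram associated to a degree sequence $\mathbf{d}=(d_0,\dots,d_s)$ has Betti numbers proportional to $\prod_{k\ne j}\frac{1}{|d_k-d_j|}$. Imposing $\beta_{0,d_0}=1$ yields the closed form
\[
\beta_j(\overline{\pi}(\mathbf{d}))=\frac{\prod_{k=1}^{s}(d_k-d_0)}{\prod_{0\le k\le s,\, k\ne j}|d_k-d_j|}.
\]

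Next I would substitute $d_k=\mathfrak d_k(\mathbf{e})$, which sets $d_0=0$ and $d_k=k+e_1+\cdots+e_k$ for $k\ge 1$, and identify each difference with one of the auxiliary linear forms defined just above the lemma statement: for $k\ge 1$ one has $d_k-d_0=T_k(\mathbf{e})$; for $1\le k<j$ one has $d_j-d_k=(j-k)+e_{k+1}+\cdots+e_j=U_{k+1,j}(\mathbf{e})$; and for $k>j$ one has $d_k-d_j=V_{k,j}(\mathbf{e})$. In particular the factor $(d_j-d_0)$ in the denominator is $T_j(\mathbf{e})$, which cancels the corresponding factor $T_j(\mathbf{e})$ in the numerator.

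Finally I would collect the remaining factors, reindexing the middle product by $i=k+1$, to match \eqref{eqn:betaj} exactly. The condition $\mathbf{e}\in\mathbb Z^s_{\ge 0}$ enters only to guarantee that each increment $d_k-d_{k-1}=1+e_k$ is at least $1$, so that $\mathfrak d(\mathbf{e})$ is a genuine (strictly increasing) degree sequence and each $T_i, U_{i,j}, V_{i,j}$ is strictly positive; this is what makes the rational function $\mathfrak b_j$ well-defined on this domain and lets the Herzog-Kühl formula apply. There is no conceptual obstacle here; the only care required is bookkeeping the index ranges in the $U$- and $V$-products when converting from the form $\prod_{k}(d_j-d_k)$ to the form written in \eqref{eqn:betaj}.
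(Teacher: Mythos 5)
Your approach is the same as the paper's: cite the Herzog--K\"uhl formula for $\beta_j(\overline{\pi}(\mathbf{d}))$, substitute $\mathbf{d}=\mathfrak d(\mathbf{e})$, and identify the factors with the linear forms $T_i$, $U_{i,j}$, $V_{i,j}$. Your normalization argument (extra factor $T_j$ in both numerator and denominator, which cancels) is a correct way to pass from the proportionality statement to the paper's product formula $\prod_{1\le i\le s,\, i\ne j}|d_i-d_0|/|d_i-d_j|$.

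One thing worth flagging, however: your (correct) identification $d_j-d_k=U_{k+1,j}$ for $1\le k<j$ gives, after the reindexing $i=k+1$, the product $\prod_{i=2}^{j}U_{i,j}$ in the denominator, \emph{not} the product $\prod_{i=2}^{j-1}U_{i,j}$ that appears in the printed formula \eqref{eqn:betaj}. So your claim that the result ``matches \eqref{eqn:betaj} exactly'' is not literally true: \eqref{eqn:betaj} as printed has an off-by-one error in the upper index of the $U$-product (and, consistently, the definition of $U_{i,j}$ would need to allow $i=j$). You can confirm this numerically with the Koszul degree sequence $\mathbf{e}=(1,1,1)$, $s=3$, $j=2$: the true Betti number is $3$, but the printed formula gives $T_1T_3/V_{3,2}=6$. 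Incidentally, the paper's own proof states the identification as $|d_i-d_j|=U_{i,j}$ for $i<j$, which is also off by one --- your version $d_j-d_k=U_{k+1,j}$ is the correct one. The underlying mathematics of your argument is sound; the sentence asserting an exact match with \eqref{eqn:betaj} should instead note the needed correction to the index range (or, equivalently, a shift in the definition of $U_{i,j}$), especially since the paper's later displayed formula \eqref{eq:logbj} also uses a different index range $\prod_{i=1}^{j-1}$, revealing an internal inconsistency in the source.
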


\begin{proof}
For any degree sequence $\mathbf{d}$ of length $s$, a result of \cite{herz-kuhl} can be used to give the explicit formulas
\[
\beta_j\left(\overline{\pi}\left(\mathbf{d}\right)\right)=\prod_{\overset{1\leq i \leq s}{i\ne j}} \frac{|d_i-d_0|}{|d_i-d_j|}
\]
(c.f. \cite[Defn 2.3]{boij-sod}).  Now let $\mathbf{d}=\mathfrak d\left(\mathbf{e}\right)$ and fix some $i\ne j$.  Observe that $|d_i-d_0|=T_i$; further, $|d_i-d_j|=U_{i,j}$ if $i<j$ and $|d_i-d_j|=V_{i,j}$ if $i>j$.  This proves the lemma.
\end{proof}

\begin{lemma}\label{lem:upshifting}
On the domain $\mathbf{e}\in \mathbb R^s_{\geq 0}$, we have $\frac{\partial}{\partial e_1} \mathfrak b_j\geq 0$.
\end{lemma}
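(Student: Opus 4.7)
The plan is to exploit the fact that the variable $e_1$ enters formula \eqref{eqn:betaj} only through the numerator. A quick inspection of the definitions of the auxiliary linear forms shows that $T_i(\mathbf{e}) = i + e_1 + e_2 + \cdots + e_i$ contains $e_1$ with coefficient $1$ for every $i \geq 1$, whereas each $U_{i,j}$ involves only $e_i, e_{i+1}, \dots, e_j$ with $i \geq 2$, and each $V_{i,j}$ involves only $e_{j+1}, \dots, e_i$ with $j+1 \geq 2$. Consequently $\partial U_{i,j}/\partial e_1 = 0$ and $\partial V_{i,j}/\partial e_1 = 0$, so the entire denominator in \eqref{eqn:betaj} is independent of $e_1$.

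With this asymmetry in hand, I would finish the argument by a logarithmic derivative computation. On the domain $\mathbf{e} \in \mathbb{R}^s_{\geq 0}$, each $T_i$, $U_{i,j}$, and $V_{i,j}$ is bounded below by a positive integer, so $\mathfrak{b}_j(\mathbf{e})$ is strictly positive, and
\[
\frac{\partial}{\partial e_1} \log \mathfrak{b}_j(\mathbf{e}) \;=\; \sum_{\substack{1 \leq i \leq s \\ i \neq j}} \frac{1}{T_i(\mathbf{e})} \;\geq\; 0.
\]
Multiplying through by the positive quantity $\mathfrak{b}_j(\mathbf{e})$ yields $\partial \mathfrak{b}_j/\partial e_1 \geq 0$, as required. (In fact the inequality is strict unless the index sum is empty, which only occurs in the trivial case $s = j = 1$.)

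There is essentially no obstacle here beyond the bookkeeping observation that $e_1$ is absent from every $U_{i,j}$ and $V_{i,j}$; once that is noted the calculation is a single application of the product rule. The interest of the lemma is presumably that it is the first in a family of monotonicity statements needed later: it says that increasing the minimal first-syzygy degree can only \emph{increase} the rank of each term in the pure resolution. This is exactly the type of control one would need to carry out the ``degeneration of pure diagrams to a Koszul complex'' strategy sketched in the introduction.
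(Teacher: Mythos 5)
Your proof is correct and matches the paper's argument: both rest on the observation that $e_1$ appears in none of the $U_{i,j}$ or $V_{i,j}$, so the denominator of \eqref{eqn:betaj} is independent of $e_1$, and the numerator $\prod_{i\ne j} T_i$ is visibly nondecreasing in $e_1$ on $\mathbb{R}^s_{\geq 0}$. Your logarithmic-derivative computation just makes explicit what the paper dismisses as ``immediately verified.''
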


\begin{proof}
Consider the expression for $\mathfrak b_j$ given in \eqref{eqn:betaj}, and observe that the denominator is not a function of $e_1$.  Hence it is sufficient to show that $\frac{\partial}{\partial e_1} \left(\prod_{i=1, i\ne j}^s T_i\right)\geq 0$ when $\mathbf{e}\in \mathbb R^s_{\geq 0}$, and this is immediately verified.
\end{proof}

\begin{lemma}\label{lem:jtozero}
Let $\mathbf{e}\in \mathbb R^s_{\geq 0}$ and fix some $j,k\in \{1, \dots, s\}$.
\begin{enumerate}
	\item\label{lem:jtozero:1} If $k<j$ then $\displaystyle \left(\frac{\partial}{\partial e_j}-\frac{\partial}{\partial e_k}\right)\mathfrak b_j\leq 0$.
	\item\label{lem:jtozero:2} If $k>j+1$ then $\displaystyle \left(\frac{\partial}{\partial e_{j+1}}-\frac{\partial}{\partial e_k}\right)\mathfrak b_j\leq 0$.
\end{enumerate}
\end{lemma}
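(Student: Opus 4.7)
The plan is to use logarithmic differentiation on the explicit rational formula for $\mathfrak{b}_j$ given in \eqref{eqn:betaj}. Since $\mathfrak{b}_j \geq 0$ on the domain $\mathbb{R}^s_{\geq 0}$, it suffices to prove the analogous inequalities for $\partial \log \mathfrak{b}_j / \partial e_k$. I would first record how each linear factor depends on the variables: $T_i$ involves $e_k$ if and only if $k \leq i$; for $2 \leq i \leq j-1$ the factor $U_{i,j}$ involves $e_k$ if and only if $i \leq k \leq j$; and for $j+1 \leq i \leq s$ the factor $V_{i,j}$ involves $e_k$ if and only if $j+1 \leq k \leq i$. In every case the coefficient of $e_k$ is $1$, so $\partial \log \mathfrak{b}_j / \partial e_k$ is a signed sum of reciprocals of these factors.

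For part (1), with $k < j$, I would form $\partial_{e_j} \log \mathfrak{b}_j - \partial_{e_k} \log \mathfrak{b}_j$. The telescoping of the $T$-contributions leaves $-\sum_{i=k}^{j-1} 1/T_i$, and the $U$-contributions leave $-\sum_{i=k+1}^{j-1} 1/U_{i,j}$ (with the $V$-terms canceling entirely since neither $e_k$ nor $e_j$ appears in any $V_{i,j}$ for $i>j$). Both remaining sums are manifestly nonpositive on $\mathbb{R}^s_{\geq 0}$, giving (1).

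For part (2), with $k > j+1$, the same kind of cancellation (now the $U$-terms vanish because neither $e_{j+1}$ nor $e_k$ appears in any $U_{i,j}$, $i<j$) yields
$$\partial_{e_{j+1}} \log \mathfrak{b}_j - \partial_{e_k} \log \mathfrak{b}_j \;=\; \sum_{i=j+1}^{k-1}\left(\frac{1}{T_i} - \frac{1}{V_{i,j}}\right).$$
The key numerical observation is the identity $T_i = T_j + V_{i,j}$: indeed $T_i = i + e_1 + \cdots + e_i$ and $V_{i,j} = (i-j) + e_{j+1} + \cdots + e_i$ differ by exactly $j + e_1 + \cdots + e_j = T_j$. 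Since $T_j \geq 0$ on $\mathbb{R}^s_{\geq 0}$, we get $T_i \geq V_{i,j} > 0$, hence $1/T_i \leq 1/V_{i,j}$, and the displayed sum is $\leq 0$.

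The only real task is careful index bookkeeping in the cancellation; the identity $T_i = T_j + V_{i,j}$ is the one nontrivial ingredient, and I do not expect any genuine obstacle.
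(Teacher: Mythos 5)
Your proof is correct and takes essentially the same approach as the paper: logarithmic differentiation of $\mathfrak{b}_j$ followed by term-by-term sign analysis of the $T$, $U$, and $V$ contributions. The only cosmetic difference is in part~(2), where you use the clean identity $T_i = T_j + V_{i,j}$ to conclude $1/T_i \leq 1/V_{i,j}$, whereas the paper carries out the equivalent algebraic simplification of $1/T_i - 1/V_{i,j}$ directly.
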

\begin{proof}
Throughout this proof, we restrict all functions to the domain $\mathbf{e}\in \mathbb R^s_{\geq 0}$.  It is sufficient to prove the statements for $\log \mathfrak b_j$.   We may write:
\begin{equation}\label{eq:logbj}
\log \mathfrak b_j=\sum_{\overset{1\leq i \leq s}{i\ne j}} \log T_i -\sum_{i=1}^{j-1} \log U_{i,j}-\sum_{i=j+1}^s \log V_{i,j}.
\end{equation}
To prove part \eqref{lem:jtozero:1} of the lemma, we assume that $k<j$ and we fix some $i\in \{1, \dots, s\}$ where $i\ne j$.  Observe that
\[
\left(\frac{\partial}{\partial e_j}-\frac{\partial}{\partial e_k}\right) \log T_i=\left(\frac{\partial}{\partial e_j}-\frac{\partial}{\partial e_k}\right)\log (i+e_1+e_2+\dots +e_i)\leq 0
\]
with equality if and only if $i< k$ or $i> j$.  Similarly, if $i\in\{1, \dots, j-1\},$ then
\[
\left(\frac{\partial}{\partial e_j}-\frac{\partial}{\partial e_k}\right) \log U_{i,j}=\left(\frac{\partial}{\partial e_j}-\frac{\partial}{\partial e_k}\right)\log (j-i+1+e_{i}+e_{i+1}+\dots +e_{j}) \geq 0
\]
with equality if and only if $k<i$.  Since $k<j,$ we also have that
\[
\left(\frac{\partial}{\partial e_j}-\frac{\partial}{\partial e_k}\right) \log V_{i,j}=\left(\frac{\partial}{\partial e_j}-\frac{\partial}{\partial e_k}\right)\log( i-j+e_{j+1}+\dots +e_{i})=0
\]
for all $i\in\{j+1, \dots, s\}$.  By combining equation \eqref{eq:logbj} with the results of these three computations, we conclude that $(\frac{\partial}{\partial e_j}-\frac{\partial}{\partial e_k})(\log \mathfrak b_j)\leq 0$ as desired.

To prove part \eqref{lem:jtozero:2} of the lemma, we now assume that $k>j+1$ and we fix some $i\in \{1, \dots, s\}$ with $i\ne j$. Observe first that $\left(\frac{\partial}{\partial e_{j+1}}-\frac{\partial}{\partial e_k}\right)\log U_{i,j}=0$ for all $i$; second, that if $i<j+1$ or $i\geq k$ then $\left(\frac{\partial}{\partial e_{j+1}}-\frac{\partial}{\partial e_k}\right) \log T_i= 0$; and third, that if $i\geq k$ then $\left(\frac{\partial}{\partial e_{j+1}}-\frac{\partial}{\partial e_k}\right)\log V_{i,j}= 0$.  It remains to show that
\begin{equation*}\label{eq:balance}
\left(\frac{\partial}{\partial e_{j+1}}-\frac{\partial}{\partial e_k}\right) \sum_{i=j+1}^{k-1} \log T_i-\log V_{i,j}\leq 0.
\end{equation*}
This follows from the computation:

\begin{align*}
\left(\frac{\partial}{\partial e_{j+1}}-\frac{\partial}{\partial e_k}\right) \sum_{i=j+1}^{k-1} \log T_i-\log V_{i,j}&=\sum_{i=j+1}^{k-1} \frac{1}{i+e_1+\dots+e_i}-\frac{1}{i-j+e_{j+1}+\dots+e_i}\\
&=\frac{\left( i-j+e_{j+1}+\dots+e_i \right) - \left( i+e_1+\dots+e_i\right)}{\left(i+e_1+\dots+e_i\right)\left(i-j+e_{j+1}+\dots+e_i\right)}\\
&=\frac{-j-e_1-\dots-e_j}{\left(i+e_1+\dots+e_i\right)\left(j-i+e_{j+1}+\dots+e_i\right)}<0
\end{align*}
which completes the proof.
\end{proof}

\begin{lemma}\label{lem:pure}
Let $\mathbf{e}\in \mathbb R^s_{\geq 0}$ with $e_1\geq \sum_{j=1}^s e_j$.  Then
\[
\mathfrak b_j(\mathbf{e})\geq \binom{s}{j}.
\]
\end{lemma}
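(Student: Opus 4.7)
The plan is to use Lemmas~\ref{lem:upshifting} and~\ref{lem:jtozero} to degenerate $\mathbf{e}$ to a three-parameter normal form, and then to verify the inequality at that endpoint by a convexity argument. First I apply Lemma~\ref{lem:jtozero}(1) iteratively to push the mass of each $e_k$ with $2 \leq k \leq j-1$ into $e_j$, and Lemma~\ref{lem:jtozero}(2) to push each $e_k$ with $j+2 \leq k \leq s$ into $e_{j+1}$. Every such redistribution weakly decreases $\mathfrak b_j$, keeps $\mathbf{e}$ in $\mathbb{R}^s_{\geq 0}$, and preserves $\sum_{k \geq 2} e_k$, so the hypothesis remains intact. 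It therefore suffices to treat $\mathbf{e}^{\ast} = (e_1, 0, \ldots, 0, A, B, 0, \ldots, 0)$ with $A, B \geq 0$ and $A + B \leq e_1$.

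Next, by Lemma~\ref{lem:upshifting}, $\mathfrak b_j$ is non-decreasing in $e_1$, so I may replace $e_1$ by its smallest admissible value $A + B$ without increasing $\mathfrak b_j$. Invoking Lemma~\ref{lem:defs}, the resulting value is
\[ \mathfrak b_j(\mathbf{e}^{\ast\ast}) = \frac{\prod_{i=1}^{j-1}(i + A + B) \cdot \prod_{\ell=1}^{s-j}(j + \ell + 2A + 2B)}{\prod_{m=1}^{j-1}(m + A) \cdot \prod_{\ell=1}^{s-j}(\ell + B)}. \]
Fix $C := A + B$ and view the right-hand side as a function of $A \in [0, C]$ (with $B = C - A$). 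The numerator depends only on $C$, while the denominator $D(A) := \prod_m(m+A)\prod_\ell(\ell+C-A)$ satisfies $\partial_A^2 \log D = -\sum_m (m+A)^{-2} - \sum_\ell(\ell+C-A)^{-2} < 0$, so $\log D$ is strictly concave in $A$. Hence $\mathfrak b_j$ is log-convex (and in particular convex) in $A$ on $[0, C]$, and its minimum is attained at one of the endpoints.

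At $A = C$, $B = 0$, the formula collapses to $\prod_{\ell=1}^{s-j}(j+\ell+2C)/(s-j)!$, which is at least $\prod_{\ell=1}^{s-j}(j+\ell)/(s-j)! = \binom{s}{j}$ since $2C \geq 0$. At $A = 0$, $B = C$, I pair the $k$-th factor of the numerator from each product for $1 \leq k \leq \min(j-1, s-j)$: the combination $(1 + C/k) \cdot k(j+k+2C)/\bigl((k+C)(j+k)\bigr)$ telescopes to $(j+k+2C)/(j+k) \geq 1$. Any leftover factor has the form either $(1 + C/i) \geq 1$ for $i > s-j$, or $\ell(j+\ell+2C)/\bigl((\ell+C)(j+\ell)\bigr) \geq 1$ for $\ell \geq j$ (the latter inequality reducing to $\ell \geq j$). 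Hence the product of all factors exceeds $1$, giving $\mathfrak b_j(\mathbf{e}^{\ast\ast}) \geq \binom{s}{j}$.

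The main obstacle is the calculus in the convexity-plus-pairing step: log-concavity of $D(A)$ yields convexity of $\mathfrak b_j$ in $A$ cleanly, but the pairing at the $A = 0$ boundary requires that the apparently mismatched linear factors across the two products line up to leave the tractable telescoping factor $(j+k+2C)/(j+k)$. The degenerate cases $j = 1$ and $j = s$ (where $A$ or $B$ is automatically zero) fit into this framework and reduce directly to one of the endpoint calculations above.
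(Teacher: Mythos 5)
Your reduction via Lemmas~\ref{lem:jtozero} and~\ref{lem:upshifting} to the state $\mathbf{e}^{\ast\ast}$ with $e_1 = A+B$, $e_j = A$, $e_{j+1} = B$ is correct, and your explicit formula for $\mathfrak b_j(\mathbf{e}^{\ast\ast})$ is right, as are the two endpoint computations. The proof fails, however, at the key step: you deduce from the log-concavity of $D(A)$ that $\mathfrak b_j = N/D$ is convex in $A$, and then conclude that ``its minimum is attained at one of the endpoints.'' That implication is backwards. A convex function on an interval attains its \emph{maximum} at an endpoint; its minimum can lie in the interior, and here it generally does. Concretely, take $s=3$, $j=2$, $C=2$: then $\mathfrak b_2(A) = 21/\bigl((1+A)(3-A)\bigr)$ equals $7$ at $A=0$ and $A=2$, but equals $21/4$ at the interior critical point $A=1$. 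So your endpoint estimates do not bound the interior minimum, and the argument does not establish the lemma. (Equivalently: $\log D$ concave means $D$ is minimized at the endpoints, hence $N/D$ is \emph{maximized} there.)

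The paper avoids this issue by degenerating along a different direction. After the same reduction to $e_i=0$ for $i\notin\{1,j,j+1\}$ with $e_1 = e_j + e_{j+1}$, it writes $e_j = te_1$, $e_{j+1} = (1-t)e_1$ with $t\in[0,1]$, and shows that $\partial_{e_1}\log\mathfrak c_j \geq 0$ on $[0,1]\times[0,\infty)$. That is a genuine monotonicity statement, not a convexity statement: with $t$ fixed, decreasing $e_1$ all the way to $0$ (which simultaneously sends $e_j,e_{j+1}\to 0$) can only decrease $\mathfrak b_j$, and at $e_1=0$ one has exactly $\binom{s}{j}$. Your path keeps $e_1 = C$ fixed while sliding mass between $e_j$ and $e_{j+1}$, which never reaches the Koszul corner $\mathbf{e}=0$ where the extremal value is actually attained. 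To repair your argument you would need to bound $\mathfrak b_j$ at the interior critical point of $D(A)$, or better, replace the ``slide $A\leftrightarrow B$'' direction with a downward scaling of $(e_1,e_j,e_{j+1})$ and prove monotonicity along that ray, which is essentially what the paper does.
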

\begin{proof}
By Lemma~\ref{lem:upshifting}, it is sufficient to prove the lemma in the case $e_1=\sum_{i=2}^n e_i$.  Furthermore, by Lemma~\ref{lem:jtozero}, we may assume that $e_i=0$ for $i\notin \{1,j,j+1\}$.

Assume for the moment that $j\notin \{1,s\}$ and let $\widetilde{\mathbf{e}}=(e_1,e_j,e_{j+1})$.  Under these assumptions we may write $\mathfrak b_j$ as a function of $\widetilde{\mathbf{e}}$.  Our goal is to show that $\mathfrak b_j(\widetilde{\mathbf{e}})\geq \binom{s}{j}$ given the constraint $e_1=e_j+e_{j+1}$ and the domain $\widetilde{\mathbf{e}}\in \mathbb R^3_{\geq 0}$.

We introduce a new variable $t$ and write $e_j=te_1$ and $e_{j+1}=(1-t)e_1$.  Under this change of coordinates, our constrained minimization problem is now equivalent to minimizing the function:
\[
\mathfrak c_j:= \frac{(1+e_1)(2+e_1)\cdots (j-1+e_1)\cdot (j+1+2e_1)\cdots (n+2e_1)}{(j-1+t e_1)\cdots (1+t e_1) (1+(1-t)e_{1})\cdots ((n-j)+(1-t)e_1)}
\]
over the domain $(t, e_1)\in [0,1]\times [0,\infty)$.

We claim that the minimum of $\log \mathfrak c_j$ on the domain $[0,1]\times [0,\infty)$ occurs when $e_1=0$. The partial derivative $\frac{\partial \log \mathfrak c_j}{\partial e_1}$ is the sum of the following $4$ functions:
\begin{itemize}
    \item\label{dFe1}  $\displaystyle f_1:=\frac{1}{1+e_1}+\dots +\frac{1}{j-1+e_1}$
    \item\label{dFe2}  $\displaystyle f_2:=\frac{2}{j+1+2e_1}+\dots +\frac{2}{n+2e_1}$
    \item\label{dFe3}  $\displaystyle f_3:=-\frac{t}{1+t e_1} - \dots -\frac{t}{j-1+t e_1}$
    \item\label{dFe4}  $\displaystyle f_4:=-\frac{1 - t}{1+(1-t)e_1} - \dots -\frac{1-t}{(n-j)+(1-t)e_1}$
\end{itemize}
We observe first that:
\begin{align*}
-f_1-f_3&=\sum_{i=1}^{j-1} \frac{-1}{i+e_1}+\frac{t}{i+te_1}\\
&=\sum_{i=1}^{j-1} \frac{-(i+te_1)+t(i+e_1)}{(i+e_1)(i+te_1)}\\
&=\sum_{i=1}^{j-1} \frac{(-1+t)i}{(i+e_1)(i+te_1)}.
\end{align*}
Hence $-f_1-f_3\leq 0$ whenever $(t,e_1)\in [0,1]\times [0,\infty)$.  We next observe that:
\begin{align*}
-f_2-f_4&=\sum_{i=j+1}^n \frac{-2}{i+2e_1}+\frac{1-t}{i+(1-t)e_1}\\
&=\sum_{i=j+1}^n \frac{\left(-2i-2(1-t)e_1\right)+\left( (1-t)i+2(1-t)e_1\right)}{(i+2e_1)(i+(1-t)e_1)}\\
&=\sum_{i=j+1}^n \frac{-i-it}{(i+2e_1)(i+(1-t)e_1)}.
\end{align*}
Hence $-f_2-f_4\leq 0$ whenever $(t,e_1)\in [0,1]\times [0,\infty)$.  Combining these two observations, we have that:
\[
-\frac{\partial \log \mathfrak c_j}{\partial e_1}=-f_1-f_2-f_3-f_4\leq 0
\]
on the domain $[0,1]\times [0,\infty)$.  A minimum of the function $\log \mathfrak c_j$ on the domain $[0,1]\times [0,\infty)$ thus occurs when $e_1=0$, and it follows that the same statement holds for the function $\mathfrak c_j$.  Direct computation yields that $\mathfrak c_j(t,0)=\binom{s}{j}$, which completes the proof when $j\notin \{1,s\}$.

If $j=1$, then we may still apply Lemma~\ref{lem:jtozero} and reduce to the case that $e_i=0$ for $i\ne 1$.  Then we have:
\[
\mathfrak b_1(e_1)=\frac{(2+e_1)(3+e_1)\cdots (s+e_1)}{(s-1)!}
\]
which is at least than $\binom{s}{1}$ whenever $e_1\geq 0$.  If $j=s$, we reduce to the case that $e_s=e_1$ and we have:
\[
\mathfrak b_s(e_1,e_s)=\frac{(1+e_1)\cdots (s-1+e_1)}{(s-1)!}
\]
which is at least $\binom{s}{s}$ whenever $e_1\geq 0$.
\end{proof}

\begin{cor}\label{cor:pures}
Let $\mathbf{d}\in \mathbb Z^{s+1}$ such that $d_0\leq 0$ and such that $d_s-s\leq 2d_1-2$.  Then:
\[
\beta_j(\overline{\pi}(\mathbf{d}))\geq \binom{s}{j}
\]
\end{cor}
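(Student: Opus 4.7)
The plan is to reduce Corollary~\ref{cor:pures} directly to Lemma~\ref{lem:pure} by translating the degree sequence $\mathbf{d}$ so that $d_0 = 0$ and then parameterizing it by a vector $\mathbf{e}$ via the map $\mathfrak d$.

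First I would observe that $\beta_j(\overline{\pi}(\mathbf{d}))$ is translation-invariant in $\mathbf{d}$: the Herzog-K\"uhl formula displayed in the proof of Lemma~\ref{lem:defs} expresses $\beta_j(\overline{\pi}(\mathbf{d}))$ purely in terms of the differences $d_i - d_0$ and $d_i - d_j$. So I would replace $\mathbf{d}$ by the shifted sequence $\mathbf{d}' := (0, d_1-d_0, \ldots, d_s-d_0)$ and reduce to the case $d_0 = 0$. The hypothesis $d_0 \leq 0$ is exactly what is needed to preserve the other hypothesis through this shift: rewriting $d_s - s \leq 2d_1 - 2$ in the shifted coordinates gives $d_s' - s \leq 2d_1' - 2 + d_0$, and since $d_0 \leq 0$ this implies the desired $d_s' - s \leq 2d_1' - 2$.

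With $d_0 = 0$ now in hand, I would define $\mathbf{e} \in \mathbb{Z}^s_{\geq 0}$ by $e_i := d_i - d_{i-1} - 1$, so that $\mathfrak d(\mathbf{e}) = \mathbf{d}$; non-negativity of the $e_i$ is immediate from the strict monotonicity of a degree sequence together with $d_1 \geq 1$ (which follows from $d_1 > d_0 = 0$ in $\mathbb Z$). A short telescoping calculation then yields $d_s - s = e_1 + e_2 + \cdots + e_s$ and $2d_1 - 2 = 2 e_1$, so the hypothesis $d_s - s \leq 2d_1 - 2$ translates precisely into the hypothesis $e_1 \geq e_2 + \cdots + e_s$ of Lemma~\ref{lem:pure}. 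Combining that lemma with Lemma~\ref{lem:defs} then gives
\[
\beta_j(\overline{\pi}(\mathbf{d})) \;=\; \mathfrak b_j(\mathbf{e}) \;\geq\; \binom{s}{j}.
\]

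I do not anticipate a serious obstacle here: all of the substantive work — the multivariable calculus analysis of $\mathfrak b_j$ carried out in Lemmas~\ref{lem:upshifting}, \ref{lem:jtozero}, and \ref{lem:pure} — has already been done. The proof of the corollary amounts to a careful bookkeeping translation between the two coordinate systems $\mathbf{d}$ and $\mathbf{e}$, with the hypothesis $d_0 \leq 0$ serving only to ensure that the normalization to $d_0 = 0$ preserves the regularity-style bound.
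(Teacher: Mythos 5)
Your proposal is correct and follows essentially the same route as the paper: define $e_i = d_i - d_{i-1} - 1$, translate the hypothesis $d_s - s \le 2d_1 - 2$ into $e_1 \geq \sum_{i\geq 2} e_i$ using $d_0 \leq 0$, and invoke Lemmas~\ref{lem:defs} and~\ref{lem:pure}. Your version is in fact slightly cleaner in that it makes the translation-invariance of $\beta_j(\overline{\pi}(\mathbf{d}))$ explicit before writing $\mathfrak d(\mathbf{e}) = \mathbf{d}$, a step the paper elides.
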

\begin{proof}
Let $\mathbf{e}=(e_1, \dots, e_s)$ where $e_i=d_i-d_{i-1}-1$, so that $\mathfrak d(\mathbf{e})=\mathbf{d}$.  Since $d_s=d_0+s+\sum_{i=1}^s e_i$ and $d_1=d_0+e_1+1$, we have that:
\[
\sum_{i=2}^s e_i=\left(d_s-s\right)-d_0-e_1\leq \left(2d_1-2\right)-d_0-e_1=d_1-1\leq e_1.
\]
The corollary now follows from Lemmas~\ref{lem:defs} and \ref{lem:pure}.
\end{proof}

\section{Proof of Theorem \ref{thm:weak}}\label{sec:proof}
We now prove our main result.
\begin{proof}[Proof of Theorem \ref{thm:weak}]
By Theorem~\ref{thm:boijsod}, we may write the Betti diagram of $M$ as a positive rational sum of pure diagrams:
\begin{equation}\label{eq:boijsoddecomp}
\beta(M)=\sum_{c\leq s \leq p} \sum_{\overset{d\in \deg(\mathbb Z^{p+1})}{\tau_s(\underline{\mathbf{d}})\leq \mathbf{d} \leq \tau_s(\overline{\mathbf{d}})}}a_{\mathbf{d}} \overline{\pi}(\mathbf{d})
\end{equation}
By linearity, it is sufficient to show that:
\begin{equation}\label{eq:betti}
\beta_j(\overline{\pi}(\mathbf{d}))\geq \binom{c}{j}
\end{equation}
for every pure diagram appearing with nonzero coefficient in \eqref{eq:boijsoddecomp} and for every $j\in \{1, \dots, c\}$.  Let $\mathbf{d}=(d_0, \dots, d_s)$ be a degree sequence corresponding to such a pure diagram, and let $\mathbf{e}=(e_1, \dots, e_s)$ defined by $e_i:=d_i-d_{i-1}-1$.  Since $\overline{\pi}(\mathbf{d})$ appears with positive coefficient in equation~\eqref{eq:boijsoddecomp}, it must contribute to the Betti diagram $\beta(M)$.  It follows that $d_0\leq 0$ and that
\[
d_s-s\leq \reg(M)\leq 2\underline{d}_1(M)-2\leq 2d_1-2.
\]
Hence $\mathbf{d}$ satisfies the hypotheses of Corollary~\ref{cor:pures}, and $\beta_j(\overline{\pi}(\mathbf{d}))\geq \binom{s}{j}$.  Since $s\geq c$, it follows that $\binom{s}{j}\geq \binom{c}{j}$, and we obtain inequality~\eqref{eq:betti}.
\end{proof}

\begin{remark}
With more care, one could show that equality in Theorem~\ref{thm:weak} may only occur in cases where $\codim(M)\leq 2$ or where there exists $m\in \mathbb N$ such that $M\cong k^m$ as a graded $S$-module.
\end{remark}

\section{Asymptotic Betti Numbers}\label{sec:asymp}
Let $I$ be an ideal generated in a single degree $\delta$.  The regularity of $I^t$ becomes a linear function $\reg(I^t)=\delta t+b$ for $t\gg 0$ (c.f. \cite[Thm 3.2]{trung-wang}, \cite[Cor 3]{kodiyalam}).
We define $b$ to be the \defi{asymptotic regularity defect} of $I$.  The following theorem gives lower bounds for the Betti numbers of $S/I^t$.
\begin{thm}\label{thm:asymp}
Let $I$ be an ideal of codimension $c$ generated in a single degree $\delta$ and with asymptotic regularity defect $b$.
We have the following lower bound on the Betti numbers of $S/I^t$:
\[
\beta_j(S/I^t) \geq \frac{(b!)^2\delta^{c-1}}{(j-1+b)!(c-j+b)!}t^{c-1} + O(t^{c-2})
 \]
for all $j=1, \dots, c$ and for all $t\gg0$.
\end{thm}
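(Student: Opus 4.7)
The plan is to mirror the proof of Theorem~\ref{thm:weak}: apply Theorem~\ref{thm:boijsod} to decompose $\beta(S/I^t)$ into pure diagrams $\overline{\pi}(\mathbf d)$, and then use the monotonicity results from Section~\ref{sec:pure} to give a uniform lower bound on each pure summand. Since $\beta_{0,0}(S/I^t) = 1$ and every pure diagram appearing in the decomposition necessarily has $d_0 = 0$, the coefficients $a_{\mathbf d}$ sum to $1$, so the desired lower bound on $\beta_j(S/I^t)$ reduces to a uniform lower bound on $\beta_j(\overline{\pi}(\mathbf d))$ for each $\mathbf d$ appearing.

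Passing to the coordinates $e_i := d_i - d_{i-1} - 1$ of Section~\ref{sec:pure}, the fact that $I^t$ is generated in degree $\delta t$ gives $\underline d_1(S/I^t) = \delta t$, and the definition of the asymptotic regularity defect gives $\reg(S/I^t) = \delta t + b - 1$ for $t \gg 0$; combined with $s \geq c$, these constraints become
\[
s \geq c, \qquad e_1 \geq \delta t - 1, \qquad e_2 + \cdots + e_s \leq b.
\]
The pure summands with $s > c$ contribute terms whose leading order in $t$ is $(\delta t)^{s-1}$ by the Herzog--K\"uhl formula; since the tail constraint carves out a bounded lattice polytope independent of $t$, there are only finitely many possible shapes for such pure summands, and for $t \gg 0$ their contributions automatically exceed any $t^{c-1}$ bound. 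Thus it suffices to treat $s = c$.

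For $s = c$, I would apply Lemma~\ref{lem:upshifting} to reduce to $e_1 = \delta t - 1$, and then apply Lemma~\ref{lem:jtozero} iteratively to concentrate the remaining tail mass entirely onto $e_j$ and $e_{j+1}$, with the boundary cases $j \in \{1,c\}$ handled as in the last paragraph of the proof of Lemma~\ref{lem:pure}. In the resulting extremal configuration $\mathbf e = (\delta t - 1, 0, \dots, 0, x, y, 0, \dots, 0)$ with $x + y \leq b$, Lemma~\ref{lem:defs} and a direct expansion of $\mathfrak b_j$ yield
\[
\mathfrak b_j(\mathbf e) = \frac{x!\, y!}{(j-1+x)!\,(c-j+y)!}\,(\delta t)^{c-1} + O(t^{c-2}).
\]
The elementary inequality $\frac{x!}{(j-1+x)!} = \prod_{k=1}^{j-1}(x+k)^{-1} \geq \frac{b!}{(j-1+b)!}$, valid for $0 \leq x \leq b$, together with its analogue for $y$, then produces
\[
\frac{x!\, y!}{(j-1+x)!\,(c-j+y)!} \;\geq\; \frac{(b!)^2}{(j-1+b)!\,(c-j+b)!},
\]
which is precisely the coefficient appearing in the statement of Theorem~\ref{thm:asymp}.

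The main obstacle I anticipate is making the $O(t^{c-2})$ error term uniform across all the pure summands in the Boij-S\"oderberg decomposition. Because the tail $(e_2,\dots,e_s)$ lives in a bounded lattice polytope independent of $t$ and $s$ is bounded by the Hilbert syzygy theorem, only a finite, $t$-independent family of pure diagrams can ever appear with nonzero coefficient; taking the maximum of the individual error constants over this finite family should deliver a uniform $O(t^{c-2})$ bound, but this is the one bookkeeping step that will require care.
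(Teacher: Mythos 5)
Your proposal is correct and follows essentially the same route as the paper: Boij--S\"oderberg decomposition of $\beta(S/I^t)$, reduction via Lemma~\ref{lem:jtozero} to tails concentrated at $e_j$ and $e_{j+1}$, and an explicit computation of the leading term of $\mathfrak b_j$ combined with the elementary monotonicity $x!/(j-1+x)! \geq b!/(j-1+b)!$ for $0\leq x\leq b$. The one point of divergence is your treatment of pure summands with $s>c$: you argue asymptotically that their higher-degree growth in $t$ makes the bound automatic (and correctly flag the uniformity bookkeeping this requires over the finite family of tail shapes), whereas the paper factors the $s=c$ expression out of the length-$s$ product and observes that each residual factor $(c+i+t\delta+b)/(c+i-j+b)$ is at least $1$, giving a reduction to $s=c$ valid for every $t$ and thereby sidestepping the uniformity issue you worry about at the end.
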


\begin{proof}
By Theorem~\ref{thm:boijsod} we may write $\beta(S/I^t)$ as a sum of pure diagrams as in equation~\eqref{eq:boijsoddecomp}.  Let $\mathbf{d}=(d_0, \dots, d_s)$ be some degree sequence such that $\overline{\pi}(\mathbf{d})$ appears with nonzero coefficient in this sum.  The equality $\codim(I^t)=\codim(I)=c,$ implies that $s\geq c$.  Let $\mathbf{e}=(e_1, \dots, e_s)$ defined by $e_i=d_i-d_{i-1}-1$.  Since $I^t$ is generated in degree $t\delta$, we have $e_1=t\delta$.  Let $t\gg0$ so that $\reg(S/I^t)=t\delta+b$.  Since $\reg(S/I^t)=\overline{d}_p(S/I^t)-p$ we have that $\sum_{i=2}^s e_i\leq b$.

It is sufficient to prove the lower bound for the Betti numbers of the pure diagram $\overline{\pi}(\mathbf{d})$.  In fact, it is sufficient to prove the lower bound for the functions $\mathfrak b_j(\mathbf{e})$ under the constraints $e_1=t\delta$ and $\sum_{i=2}^s e_i\leq b$.  Let $j\in \{1, \dots, c\}$.  By Lemma~\ref{lem:jtozero}, we may assume that $e_i=0$ unless $i\in \{1,j,j+1\}$.  Hence we reduce to the case that $e_j+e_{j+1}\leq b$.  We now seek to compute $\mathfrak b_j$.
\[
\mathfrak b_j(\mathbf{e})= \frac{(1+t\delta)(2+t\delta)\cdots (j-1+t\delta)(j+1+t\delta+b) \cdots(s+t\delta+b)}{(j-1+e_j)\cdots (1+e_j)(1+e_{j+1})\cdots (s-j+e_{j+1})}
\]
Note that $e_j$ and $e_{j+1}$ only appear in the denominator, and both are positive numbers less than $b$.  Hence setting $e_j=e_{j+1}=b$ only decreases the right-hand side.  This yields:
\begin{equation}\label{eq:asymp1}
\mathfrak b_j(\mathbf{e})\geq \frac{(1+t\delta)(2+t\delta)\cdots (j-1+t\delta)(j+1+t\delta+b) \cdots(s+t\delta+b)}{(j-1+b)\cdots (1+b)(1+b)\cdots (s-j+b)}
\end{equation}
Since $s\geq c$ we may rewrite the right-hand side of \eqref{eq:asymp1} as
\[
\left( \frac{(1+t\delta)(2+t\delta)\cdots (j-1+t\delta)(j+1+t\delta+b)\cdots(c+t\delta+b)}{(1+b)\cdots (j-1+b)(1+b)\cdots (c-j+b)} \right)\left( \prod_{i=1}^{s-c} \frac{(c+i+t\delta+b)}{(c+i-j+b)}
\right).
\]
Each term of the product on the right is greater than $1$, so by deleting this product and substituting back into \eqref{eq:asymp1}, we obtain the inequality:
\begin{align*}
\mathfrak b_j(\mathbf{e})&\geq \frac{(1+t\delta)(2+t\delta)\cdots (j-1+t\delta)(j+1+t\delta+b)\cdots(c+t\delta+b)}{(1+b)\cdots (j-1+b)(1+b)\cdots (c-j+b)}\\
&=\frac{(b!)^2\delta^{c-1}}{(j-1+b)!(c-j+b)!}t^{c-1} + O(t^{c-2}).
\end{align*}
This completes the proof.
\end{proof}

\section{Examples}\label{sec:apps}
In this section, we consider several applications of Theorem~\ref{thm:weak}, and we remark on the necessity of the hypothesis that $\reg(M)\leq 2\underline{d}_1(M)-2$.

\begin{example}
Let $V\subseteq S_d$ be any vector space of forms of degree $d$ with $d>1$, and let $I\subseteq S$ be the ideal $V+\mathfrak m^{d+1}$.  Then $S/I$ satisfies the hypotheses of Theorem~\ref{thm:weak}.  More generally, if $e\leq 2d-1$ and $J$ is the ideal generated by $V$ and by $\mathfrak m^e$, then $S/J$ satisfies the hypotheses of Theorem~\ref{thm:weak}.
\end{example}

\begin{example}[Curves of High Degree]
Let $C\subseteq \mathbb P^{n}$ be a smooth curve of genus embedded by a complete linear system of degree at least $2g+1$.  Let $I_C\subseteq k[x_0, \dots, x_n]$ be the ideal defining $C$.  Then $\reg(S/I_C)\leq 2$ by \cite[Corollary 8.2]{eis-syzygy}. Hence $S/I_C$ satisfies the hypotheses of Theorem~\ref{thm:weak}, and thus $\beta_i(S/I_C)\geq \binom{n-1}{i}$.
\end{example}

\begin{example}[Toric Surfaces]\label{ex:toric}
Let $X\subseteq \mathbb P^n$ be a toric surface embedded by a complete linear system $|A|$.  Let $I_X\subseteq S=k[x_0, \dots, x_n]$ be the defining ideal of $X$.  We claim that $S/I_X$ satisfies the hypotheses of Theorem~\ref{thm:weak}, and hence that $\beta_i(S/I_X)\geq \binom{n-2}{i}$.  Since $I_X$ has no generators in degree $1$, this amounts to showing that $\reg(S/I_X)\leq 2$.  It is equivalent to show that the sheaf $\mathcal I_X:=\widetilde{I_X}$ is $3$-regular~\cite[Prop 4.16]{eis-syzygy}.

We first check that $H^1(\mathbb P^n,\mathcal I_X(2))=0$.  Since $X$ is a toric surface and $A$ is an ample divisor, it follows from, for instance~\cite[Cor 2.1]{schenck}, that $X$ satisfies condition $N_0$, and hence that $X$ is projectively normal.  The surjectivity of the map
\[
H^0(\mathbb P^n, \mathcal O_{\mathbb P^n}(2))\to H^0(X, \mathcal O_X(2))
\]
and the vanishing of $H^1(\mathbb P^n, \mathcal O_{\mathbb P^n}(2))$ then imply that $H^1(\mathbb P^n,\mathcal I_X(2))=0$.

We next check that $H^2(\mathbb P^n, \mathcal I_X(1))=0$.  This follows from the exact sequence:
\[
H^1(X,\mathcal O_X(1))\to H^2(\mathbb P^n, \mathcal I_X(1))\to H^2(\mathbb P^n, \mathcal O_{\mathbb P^n}(1))
\]
and the fact that higher cohomology of ample line bundles vanishes on toric varieties.  We conclude that $\mathcal I_X$ is $3$-regular, which implies that $S/I_X$ satisfies the hypotheses of Theorem~\ref{thm:weak}.
\end{example}

\begin{example}
Let $I$ be any ideal with minimal degree generator in degree $\underline{d}_1$ and maximal degree generator in degree $\overline{d}_1$.  Assume that $\overline{d}_1(I)<2\underline{d}_1(I)$.  Then
\[
\reg(S/I^t)\leq t\overline{d}_1+b
\]
for some $b$ and for all $t\geq 1$~\cite[Thm 1.1(i)]{cht}.  Since $\overline{d}_1(I)<2\underline{d}_1(I)$, it follows that, for all $t\gg0$, $t\overline{d}_1(I)+b<2t\underline{d}_1(I)-2$.  Hence, for all $t\gg0$, the module $S/I^t$ satisfies the hyoptheses of Theorem~\ref{thm:weak}.
\end{example}

The method of proof for Theorem~\ref{thm:weak} breaks down if one removes the hypothesis that $\reg(M)\leq 2\underline{d}_1(M)-2$.  One issue is that the statement:
\[
\beta_j(M)\geq \beta_0(M) \binom{\codim(M)}{j}
\]
is {\em not} true in general.  For example, if $S=k[x,y]$ and $N$ is the cokernel of a generic $2\times 3$ matrix of linear forms, then
\[
\beta_1(N)=3< 4=\beta_0(N) \binom{\codim(N)}{1}.
\]
There also exist pure diagrams with integral entries which do not satisfy the graded BEH rank conjecture.  For instance, the diagram:
\[
\overline{\pi}(0,1,2,3,5,6)=\begin{pmatrix}
1&\frac{9}{2}&\frac{15}{2}&5&-&-\\
-&-&-&-&\frac{3}{2}&\frac{1}{2}
\end{pmatrix}
\]
does not satisfy any version of Conjecture~\ref{conj:beh}.

\section*{Acknowledgements}
We thank Kiril Datchev for helpful discussions about the proof of Lemma~\ref{lem:pure}, David Eisenbud for his guidance, Milena Hering for suggesting improvements to Example~\ref{ex:toric}, and Tony V\'{a}rilly-Alvarado for comments on an early draft.



\begin{bibdiv}
\begin{biblist}

\bib{boij-sod}{article}{
    AUTHOR = {Boij, Mats},
    AUTHOR = {S{\"o}derberg, Jonas},
     TITLE = {Graded {B}etti numbers of {C}ohen-{M}acaulay modules and the
              multiplicity conjecture},
   JOURNAL = {J. Lond. Math. Soc. (2)},
  FJOURNAL = {Journal of the London Mathematical Society. Second Series},
    VOLUME = {78},
      YEAR = {2008},
    NUMBER = {1},
     PAGES = {85--106},
      ISSN = {0024-6107},
   MRCLASS = {13C14 (13D02 13H15)},
  MRNUMBER = {MR2427053},
}

\bib{boij-sod2}{article}{
AUTHOR={Boij, Mats},
AUTHOR = {S{\"o}derberg, Jonas},
TITLE = {Betti numbers of graded modules and the Multiplicity Conjecture in the non-Cohen-Macaulay case},
JOURNAL={arXiv},
NUMBER = {0803.1645},
YEAR = {2008},
}

\bib{buchs-eis-gor}{article}{
    AUTHOR = {Buchsbaum, David A.},
    AUTHOR = {Eisenbud, David},
     TITLE = {Algebra structures for finite free resolutions, and some
              structure theorems for ideals of codimension {$3$}},
   JOURNAL = {Amer. J. Math.},
  FJOURNAL = {American Journal of Mathematics},
    VOLUME = {99},
      YEAR = {1977},
    NUMBER = {3},
     PAGES = {447--485},
      ISSN = {0002-9327},
   MRCLASS = {13D15},
  MRNUMBER = {MR0453723 (56 \#11983)},
MRREVIEWER = {M. Nagata},
}

\bib{carlsson}{article}{
   AUTHOR = {Carlsson, Gunnar},
     TITLE = {Free {$({\bf Z}/2)\sp k$}-actions and a problem in commutative
              algebra},
 BOOKTITLE = {Transformation groups, {P}ozna\'n 1985},
    SERIES = {Lecture Notes in Math.},
    VOLUME = {1217},
     PAGES = {79--83},
 PUBLISHER = {Springer},
   ADDRESS = {Berlin},
      YEAR = {1986},
   MRCLASS = {57S17 (13D99 18G20 55N25)},
  MRNUMBER = {MR874170 (88g:57042)},
MRREVIEWER = {Ian Hambleton},
}

\bib{chang}{article}{
    AUTHOR = {Chang, Shou-Te},
     TITLE = {Betti numbers of modules of exponent two over regular local
              rings},
   JOURNAL = {J. Algebra},
  FJOURNAL = {Journal of Algebra},
    VOLUME = {193},
      YEAR = {1997},
    NUMBER = {2},
     PAGES = {640--659},
      ISSN = {0021-8693},
     CODEN = {JALGA4},
   MRCLASS = {13D02},
  MRNUMBER = {MR1458807 (99c:13030)},
}

\bib{chang2}{article}{
   AUTHOR = {Chang, Shou-Te},
     TITLE = {Betti numbers of modules of essentially monomial type},
   JOURNAL = {Proc. Amer. Math. Soc.},
  FJOURNAL = {Proceedings of the American Mathematical Society},
    VOLUME = {128},
      YEAR = {2000},
    NUMBER = {7},
     PAGES = {1917--1926},
      ISSN = {0002-9939},
     CODEN = {PAMYAR},
   MRCLASS = {13D02 (13D25)},
  MRNUMBER = {MR1653433 (2000m:13021)},
MRREVIEWER = {Joseph P. Brennan},
}

\bib{chara-multi}{article}{
    AUTHOR = {Charalambous, Hara},
     TITLE = {Betti numbers of multigraded modules},
   JOURNAL = {J. Algebra},
  FJOURNAL = {Journal of Algebra},
    VOLUME = {137},
      YEAR = {1991},
    NUMBER = {2},
     PAGES = {491--500},
      ISSN = {0021-8693},
     CODEN = {JALGA4},
   MRCLASS = {13D03 (13C15)},
  MRNUMBER = {MR1094254 (92b:13020)},
MRREVIEWER = {Yuji Yoshino},
}

\bib{charalamb}{article}{
    AUTHOR = {Charalambous, Hara},
    AUTHOR = {Evans, E. Graham},
    AUTHOR = {Miller, Matthew},
     TITLE = {Betti numbers for modules of finite length},
   JOURNAL = {Proc. Amer. Math. Soc.},
  FJOURNAL = {Proceedings of the American Mathematical Society},
    VOLUME = {109},
      YEAR = {1990},
    NUMBER = {1},
     PAGES = {63--70},
      ISSN = {0002-9939},
     CODEN = {PAMYAR},
   MRCLASS = {13H10 (13H15)},
  MRNUMBER = {MR1013967 (90j:13021)},
MRREVIEWER = {P. Schenzel},
}

\bib{chara-evans-problems}{incollection}{
    AUTHOR = {Charalambous, H.},
    AUTHOR = {Evans, Jr., E. G.},
     TITLE = {Problems on {B}etti numbers of finite length modules},
 BOOKTITLE = {Free resolutions in commutative algebra and algebraic geometry
              ({S}undance, {UT}, 1990)},
    SERIES = {Res. Notes Math.},
    VOLUME = {2},
     PAGES = {25--33},
 PUBLISHER = {Jones and Bartlett},
   ADDRESS = {Boston, MA},
      YEAR = {1992},
   MRCLASS = {13D02 (13D25)},
  MRNUMBER = {MR1165315 (93e:13018)},
MRREVIEWER = {{\c{S}}erban B{\u{a}}rc{\u{a}}nescu},
}

\bib{cht}{article}{
    AUTHOR = {Cutkosky, S. Dale},
    AUTHOR = {Herzog, J{\"u}rgen},
    AUTHOR = {Trung, Ng{\^o} Vi{\^e}t},
     TITLE = {Asymptotic behaviour of the {C}astelnuovo-{M}umford regularity},
   JOURNAL = {Compositio Math.},
  FJOURNAL = {Compositio Mathematica},
    VOLUME = {118},
      YEAR = {1999},
    NUMBER = {3},
     PAGES = {243--261},
      ISSN = {0010-437X},
     CODEN = {CMPMAF},
   MRCLASS = {13D45 (13A30 13C99 14F17)},
  MRNUMBER = {MR1711319 (2000f:13037)},
MRREVIEWER = {Vincenzo Di Gennaro},
}

\bib{dugger}{article}{
    AUTHOR = {Dugger, Daniel},
     TITLE = {Betti numbers of almost complete intersections},
   JOURNAL = {Illinois J. Math.},
  FJOURNAL = {Illinois Journal of Mathematics},
    VOLUME = {44},
      YEAR = {2000},
    NUMBER = {3},
     PAGES = {531--541},
      ISSN = {0019-2082},
     CODEN = {IJMTAW},
   MRCLASS = {13D02 (13D25)},
  MRNUMBER = {MR1772426 (2001f:13019)},
MRREVIEWER = {Srikanth B. Iyengar},
}


\bib{eis-syzygy}{book}{
   author={Eisenbud, David},
   title={The geometry of syzygies},
   series={Graduate Texts in Mathematics},
   volume={229},
   note={A second course in commutative algebra and algebraic geometry},
   publisher={Springer-Verlag},
   place={New York},
   date={2005},
   pages={xvi+243},
   isbn={0-387-22215-4},
   review={\MR{2103875 (2005h:13021)}},
}

\bib{eis-schrey}{article}{
AUTHOR = {Eisenbud, David},
AUTHOR = {Schreyer, Frank-Olaf},
TITLE = {Betti Numbers of graded modules and cohomology of vector bundles},
JOURNAL = {arXiv},
NUMBER = {0712.1843},
YEAR = {2007},
}



\bib{evans-griffith}{article}{
    AUTHOR = {Evans, E. Graham},
    AUTHOR = {Griffith, Phillip},
     TITLE = {Binomial behavior of {B}etti numbers for modules of finite
              length},
   JOURNAL = {Pacific J. Math.},
  FJOURNAL = {Pacific Journal of Mathematics},
    VOLUME = {133},
      YEAR = {1988},
    NUMBER = {2},
     PAGES = {267--276},
      ISSN = {0030-8730},
     CODEN = {PJMAAI},
   MRCLASS = {13C99 (13D99 13H05)},
  MRNUMBER = {MR941922 (89d:13014)},
MRREVIEWER = {J{\"u}rgen Herzog},
}
		


\bib{hartshorne-vector}{article}{
    AUTHOR = {Hartshorne, Robin},
     TITLE = {Algebraic vector bundles on projective spaces: a problem list},
   JOURNAL = {Topology},
  FJOURNAL = {Topology. An International Journal of Mathematics},
    VOLUME = {18},
      YEAR = {1979},
    NUMBER = {2},
     PAGES = {117--128},
      ISSN = {0040-9383},
     CODEN = {TPLGAF},
   MRCLASS = {14F05 (14D20 32G13 32L10)},
  MRNUMBER = {MR544153 (81m:14014)},
MRREVIEWER = {M. Schneider},
}

\bib{herz-kuhl}{article}{	
    AUTHOR = {Herzog, J.},
    AUTHOR = {K{\"u}hl, M.},
     TITLE = {On the {B}etti numbers of finite pure and linear resolutions},
   JOURNAL = {Comm. Algebra},
  FJOURNAL = {Communications in Algebra},
    VOLUME = {12},
      YEAR = {1984},
    NUMBER = {13-14},
     PAGES = {1627--1646},
      ISSN = {0092-7872},
     CODEN = {COALDM},
   MRCLASS = {13D05 (13H10)},
  MRNUMBER = {MR743307 (85e:13021)},
MRREVIEWER = {W. V. Vasconcelos},
}

\bib{hochster-richert}{article}{
    AUTHOR = {Hochster, Melvin},
    AUTHOR = {Richert, Benjamin},
     TITLE = {Lower bounds for {B}etti numbers of special extensions},
   JOURNAL = {J. Pure Appl. Algebra},
  FJOURNAL = {Journal of Pure and Applied Algebra},
    VOLUME = {201},
      YEAR = {2005},
    NUMBER = {1-3},
     PAGES = {328--339},
      ISSN = {0022-4049},
     CODEN = {JPAAA2},
   MRCLASS = {13D02 (13H05)},
  MRNUMBER = {MR2158762 (2006i:13025)},
MRREVIEWER = {Srikanth B. Iyengar},
}

\bib{huneke-ulrich}{article}{
    AUTHOR = {Huneke, Craig},
    AUTHOR = {Ulrich, Bernd},
     TITLE = {The structure of linkage},
   JOURNAL = {Ann. of Math. (2)},
  FJOURNAL = {Annals of Mathematics. Second Series},
    VOLUME = {126},
      YEAR = {1987},
    NUMBER = {2},
     PAGES = {277--334},
      ISSN = {0003-486X},
     CODEN = {ANMAAH},
   MRCLASS = {13H10 (13D10 13H15 14B07)},
  MRNUMBER = {MR908149 (88k:13020)},
MRREVIEWER = {Matthew Miller},
}

\bib{kodiyalam}{article}{
    AUTHOR = {Kodiyalam, Vijay},
     TITLE = {Asymptotic behaviour of {C}astelnuovo-{M}umford regularity},
   JOURNAL = {Proc. Amer. Math. Soc.},
  FJOURNAL = {Proceedings of the American Mathematical Society},
    VOLUME = {128},
      YEAR = {2000},
    NUMBER = {2},
     PAGES = {407--411},
      ISSN = {0002-9939},
     CODEN = {PAMYAR},
   MRCLASS = {13D45 (14B15)},
  MRNUMBER = {MR1621961 (2000c:13027)},
MRREVIEWER = {P. Schenzel},
}

\bib{santoni}{article}{
    AUTHOR = {Santoni, Larry},
     TITLE = {Horrocks' question for monomially graded modules},
   JOURNAL = {Pacific J. Math.},
  FJOURNAL = {Pacific Journal of Mathematics},
    VOLUME = {141},
      YEAR = {1990},
    NUMBER = {1},
     PAGES = {105--124},
      ISSN = {0030-8730},
     CODEN = {PJMAAI},
   MRCLASS = {13D40 (13D03 13D30)},
  MRNUMBER = {MR1028267 (91b:13022)},
MRREVIEWER = {J{\"u}rgen Herzog},
}

\bib{schenck}{article}{
   author={Schenck, Hal},
   title={Lattice polygons and Green's theorem},
   journal={Proc. Amer. Math. Soc.},
   volume={132},
   date={2004},
   number={12},
   pages={3509--3512 (electronic)},
   issn={0002-9939},
   review={\MR{2084071 (2005f:52028)}},
}

\bib{trung-wang}{article}{
    AUTHOR = {Trung, Ng{\^o} Vi{\^e}t},
    AUTHOR = {Wang, Hsin-Ju},
     TITLE = {On the asymptotic linearity of {C}astelnuovo-{M}umford
              regularity},
   JOURNAL = {J. Pure Appl. Algebra},
  FJOURNAL = {Journal of Pure and Applied Algebra},
    VOLUME = {201},
      YEAR = {2005},
    NUMBER = {1-3},
     PAGES = {42--48},
      ISSN = {0022-4049},
     CODEN = {JPAAA2},
   MRCLASS = {13D45},
  MRNUMBER = {MR2158746 (2006k:13039)},
MRREVIEWER = {Isabel Bermejo},
}

\end{biblist}
\end{bibdiv}

\end{document}